\documentclass[a4paper,12pt]{article}

\usepackage[utf8]{inputenc}
\usepackage[english]{babel}
\usepackage{enumitem}
\usepackage{float}
\usepackage{amsmath}
\usepackage{amssymb}
\usepackage{amsthm}
\usepackage{amsfonts}
\usepackage{booktabs}
\usepackage{epsfig}
\usepackage{epstopdf}
\usepackage{cite}
\usepackage{multirow} 
\usepackage{algorithm,algpseudocode}
\usepackage{subcaption}
\usepackage{xcolor}
\usepackage[linkcolor=blue,colorlinks=true]{hyperref}

\usepackage{caption}
\captionsetup{justification=centering}

\setlength{\topmargin}{-1.0cm}
\setlength{\textheight}{24cm}
\setlength{\textwidth}{15cm}
\setlength{\oddsidemargin}{8mm}
\setlength{\evensidemargin}{8mm}


\newtheorem{dfn}{Definition}[section]
\newtheorem{lem}[dfn]{Lemma}
\newtheorem{thm}[dfn]{Theorem}
\newtheorem{cor}[dfn]{Corollary}

\theoremstyle{definition}

\newtheorem{asm}[dfn]{Assumption}
\newtheorem{rem}[dfn]{Remark}

\allowdisplaybreaks

\title{Linear Convergence Results for Inertial Type Projection Algorithm for Quasi-Variational Inequalities}

\date{\today}

\author{Yonghong Yao \footnote{School of Mathematical Sciences, Tiangong University, Tianjin 300387, China; and Center for Advanced Information Technology, Kyung Hee University, Seoul 02447, South Korea; e-mail: yyhtgu@hotmail.com}
\hspace*{0.8mm}, Lateef O. Jolaoso\footnote{School of Mathematical Sciences, University of Southampton, SO17 1BJ, United Kingdom; Department of Mathematics and Applied Mathematics, Sefako Makgatho Health Sciences University, P.O. Box 94 Medunsa 0204, Pretoria, South Africa; e-mail: l.o.jolaoso@soton.ac.uk.}
\hspace*{0.8mm}, Yekini Shehu\footnote{(Corresponding Author) School of Mathematical Sciences, Zhejiang Normal University, Jinhua 321004, People’s Republic of China; e-mail: yekini.shehu@zjnu.edu.cn}}
\begin{document}

\maketitle

\begin{abstract}
\noindent Many recently proposed gradient projection algorithms with inertial extrapolation step for solving quasi-variational inequalities in Hilbert spaces are proven to be strongly convergent with no linear rate given when the cost operator is strongly monotone and Lipschitz continuous. In this paper, our aim is to design an inertial type gradient projection algorithm for quasi-variational inequalities and obtain its linear rate of convergence. Therefore, our results fill in the gap for linear convergence results for inertial type gradient projection algorithms for quasi variational inequalities in Hilbert spaces. We perform numerical implementations of our proposed algorithm and give numerical comparisons with other related inertial type gradient projection algorithms for quasi variational inequalities in the literature.\\

\noindent  {\bf Keywords:} Quasi-variational inequalities; gradient projection algorithm; inertial extrapolation; strongly Monotone.\\

\noindent {\bf 2010 MSC classification:} 47H05, 47J20,  47J25, 65K15, 90C25.

\end{abstract}

\section{Introduction}
Suppose that $H$ is a real Hilbert space equipped with inner product $\langle \cdot ,\cdot \rangle $ and induced norm $\Vert \cdot \Vert $. Assume further that $K$ is a nonempty, closed and convex subset of $H$. Given a nonlinear operator $\mathcal{A}:H\rightarrow H$  and a set-valued operator $K:H\rightrightarrows H$ such that for each element $x\in H$, we have a closed and convex set $K(x)\subset H$. Then the  \textit{Quasi-Variational Inequality} (QVI), is to find $x_{*}\in H$ such that $x_{*}\in K(x_{*})$
and%
\begin{equation}
\left\langle \mathcal{A}(x_{*}),x-x_{*}\right\rangle \geq0\text{ for all
}x\in K(x_{*}). \label{eq:QVI}%
\end{equation}
\noindent
In the special case when $K(x)\equiv K$ for all $x\in H$, we have the QVI \eqref{eq:QVI} reduces to the variational inequality problem (\cite{Fichera63,Fichera64,Stampacchia64,KS80}), viz.: find $x_{*}\in K$ such that%
\begin{equation}
\left\langle \mathcal{A}(x_{*}),x-x_{*}\right\rangle \geq0\text{ for all
}x\in K. \label{eq:VIP}%
\end{equation}

\noindent
Several projection type methods have been introduced to solve QVI \eqref{eq:QVI} in the literature. In \cite{ajm18},
 Antipin et al. designed the gradient projection algorithm:
\begin{equation}\label{extra}
 x_{k+1}= P_{K(x_k)}(x_k-\gamma \mathcal{A}(x_k))
\end{equation}
and the extragradient algorithm:
		\begin{eqnarray} \label{extra2}
\left\{  \begin{array}{ll}
      &	y_k= P_{K(x_k)}(x_k-\gamma \mathcal{A}(x_k)),\\
      & x_{k+1}= P_{K(x_k)}(x_k-\gamma \mathcal{A}(y_k))
      \end{array}
       \right.
      \end{eqnarray}
\noindent
 to solve
QVI \eqref{eq:QVI} and obtained strong convergence results when $K(x):=c(x)+K$, $c$ Lipschitz continuous, $\mathcal{A}$ is strongly monotone and Lipschitz continuous. Similar results to \cite{ajm18} are also obtained in \cite{Mosco76,noor85,noor88,nnk13}.\\

\noindent
We note that the extragradient method \eqref{extra2} requires computing two projections and two evaluations of $\mathcal{A}$ per iteration. This could be computational expensive for large scale problems. In \cite{Mijajlovic2018}, Mijajlovi\'{c} et al. introduced  relaxed projection type algorithms
\begin{equation}\label{extra3}
x_{k+1}=(1-\alpha_k)x_k+\alpha_k P_{K(x_k)}(x_k-\gamma \mathcal{A}(x_k))
\end{equation}
 and
		\begin{eqnarray}\label{extra4}
\left\{  \begin{array}{ll}
      &	 y_k= (1-\beta_k)x_k+\beta_k P_{K(x_k)}(x_k-\gamma \mathcal{A}(x_k)),\\
      &  x_{k+1}= (1-\alpha_k)x_k+\alpha_k P_{K(y_k)}(y_k-\gamma \mathcal{A}(y_k))
      \end{array}
       \right.
      \end{eqnarray}
where $\alpha_k \in (0,1], \beta_k \in [0,1]$. Consequently, strong convergence results are obtained for QVI \eqref{eq:QVI} when  $\mathcal{A}$ is strongly monotone and Lipschitz continuous operator with condition \eqref{eq: lambda condition} fulfilled for which the case $K(x):=c(x)+K, x \in H$ satisfies. The proposed algorithm in \eqref{extra} is a special case of algorithm \eqref{extra3} with $\alpha_k=1$ and the results of Mijajlovi\'{c} et al. \cite{Mijajlovic2018} are also related to the ones in  \cite{Aussel,Facchinei1,Facchinei2,Latorre}.\\

\noindent
Due to recent contributions of optimization algorithms with inertial extrapolation steps in terms of improvement on the speed of convergence as enumerated in \cite{Attouch,Attouch2,alvarez,Alvarez,Beck,Mainge2,Polyak2,Attouch3,Lorenz,Ochs,Bot,Shehu,Bot2,CChen,Bot3} and other related papers, gradient projection algorithms with extrapolation step for solving QVI \eqref{eq:QVI} was studied in \cite{ShehuOptim}:
\begin{eqnarray}\label{extra5}
\left\{\begin{array}[c]{ll}
 & y_{k}=x_{k}+\theta_k(x_{k}-x_{k-1}),\\
 & x_{k+1}=(1-\alpha_k)y_k+\alpha_k P_{K(y_k)}(y_k-\gamma \mathcal{A}(y_k)),
\end{array}\right.
\end{eqnarray}
with $\alpha_k \in (0,1)$ and $0\leq \theta_k \leq \theta<1$ and strong convergence results (with no linear convergence results) obtained under condition that $\mathcal{A}$ is strongly monotone and Lipschitz continuous. With $\mathcal{A}$ being strongly monotone and Lipschitz continuous in QVI \eqref{eq:QVI}, \d{C}opur et al. in \cite{Copur} also obtained strong convergence results for the gradient projection algorithm \eqref{extra} with double inertial extrapolation steps. However, no linear convergence results was also given in \cite{Copur}. Similar strong convergence results with no linear rate of convergence are given in \cite{Jabeen}. \\

\noindent
{\bf Our Contributions.}
\begin{itemize}
  \item Motivated by the inertial type gradient projection algorithms proposed in \cite{Copur,Jabeen,ShehuOptim}, where no linear convergence results are given, our aim in this paper is to prove the linear convergence of an inertial type projection algorithm for QVI \eqref{eq:QVI}.
  \item The inertial factor in our proposed algorithm has the possibility of taking both negative and non-negative values unlike the inertial factors in \cite{Copur,Jabeen,ShehuOptim} and other associated papers where the inertial factors are in [0,1]. Kindly see Remark \ref{oluwa} below.
  \item  We also show that
proposed inertial type gradient projection algorithm proposed in this paper is efficient and outperforms the other related inertial type gradient projection algorithms in \cite{Copur,Shehu,ShehuOptim} using standard test problems in the literature.
\end{itemize}

\noindent
\textbf{Outline.}
We outline the paper as, viz: Section \ref{sec:Preliminaries} entails some basic facts, concepts, and lemmas, which are needed in the linear convergence analysis. In Section \ref{Sec:Method}, we introduce an inertial type gradient projection algorithm with their corresponding linear convergence results given. Section \ref{Sec:Numerics} discusses the numerical implementations of the proposed algorithm with comparisons with other related algorithms while in Section \ref{Sec:Final}, we give a final conclusion of our results.

\section{Preliminaries}\label{sec:Preliminaries}

\begin{dfn}
Given an operator $\mathcal{A}:H\rightarrow H$,
\begin{itemize}
  \item $\mathcal{A}$ is called $L$-\emph{Lipschitz continuous} ($L>0$), if
\begin{equation}
\Vert \mathcal{A}(x)-\mathcal{A}(y)\Vert \leq L \Vert x-y\Vert \text{ for all } x,y\in H.
\end{equation}

\item  $\mathcal{A}$ is called $\mu$-\emph{strongly monotone} ($\mu>0$), if%
\begin{equation}
\langle \mathcal{A}(x)-\mathcal{A}(y),x-y\rangle\geq\mu\Vert x-y\Vert^{2}\text{ for all
}x,y\in H.
\end{equation}

\end{itemize}
\end{dfn}
\noindent
For each $x\in H$, there exists a unique nearest point in $K$, denoted
by $P_{K}(x)$, such that
\begin{equation}
\Vert x-P_{K}(x)\Vert \leq \Vert x-y\Vert \text{ for all } y\in K.
\label{eq:2.2}
\end{equation}%
This operator $P_{K}:H\rightarrow K$ is called the \textit{metric projection} of $H$
onto $K$, characterized \cite[Section 3]{GR84} by
\begin{equation}
P_{K}(x)\in K
\end{equation}
and
\begin{equation}
\left\langle x-P_{K}\left( x\right) ,P_{K}\left( x\right) -y\right\rangle
\geq 0\text{ for all }x\in H,\text{ }y\in K. \label{eq:2.3}
\end{equation}%

\noindent
We state the following sufficient conditions for the existence of solutions of QVIs \eqref{eq:QVI} given in \cite{noor94}.

\begin{lem}
  Let $\mathcal{A}:H\rightarrow H$ be $L$-Lipschitz continuous and $\mu$-strongly monotone on $H$ and $K(\cdot)$ be a set-valued mapping with nonempty, closed and convex values such that there exists $\lambda \geq 0$ such that
\begin{eqnarray}\label{eq: lambda condition}
\|P_{K(x)}(z)-P_{K(y)}(z)\| \leq \lambda \|x-y\|,~~x,y,z \in  H, \quad \lambda +\sqrt{1-\frac{\mu^2}{L^2}}<1.
\end{eqnarray}
Then the QVI \eqref{eq:QVI} has a unique solution.
\end{lem}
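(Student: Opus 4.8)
The plan is to recast the QVI \eqref{eq:QVI} as a fixed-point problem and then invoke the Banach contraction principle. First I would establish the standard equivalence: for any fixed step size $\gamma > 0$, a point $x_* \in H$ solves \eqref{eq:QVI} if and only if it is a fixed point of the map
\[
T(x) := P_{K(x)}\bigl(x - \gamma \mathcal{A}(x)\bigr).
\]
This follows directly from the variational characterization \eqref{eq:2.3} of the metric projection: writing out $x_* = P_{K(x_*)}(x_* - \gamma\mathcal{A}(x_*))$ gives $x_* \in K(x_*)$ together with $\langle -\gamma\mathcal{A}(x_*), x_* - y\rangle \geq 0$ for all $y \in K(x_*)$, which upon rearranging is precisely the inequality in \eqref{eq:QVI}.

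The next step is to show that $T$ is a contraction for a suitable $\gamma$. I would split the difference $T(x) - T(y)$ by inserting the intermediate term $P_{K(y)}(x - \gamma\mathcal{A}(x))$ and apply the triangle inequality. The first resulting piece is controlled by the hypothesis \eqref{eq: lambda condition} (taken with $z = x - \gamma\mathcal{A}(x)$), yielding a bound of $\lambda\|x-y\|$. The second piece is handled using the nonexpansiveness of $P_{K(y)}$ followed by the standard estimate for $I - \gamma\mathcal{A}$: expanding $\|(x-y) - \gamma(\mathcal{A}(x) - \mathcal{A}(y))\|^2$ and using $\mu$-strong monotonicity together with $L$-Lipschitz continuity gives the bound $(1 - 2\gamma\mu + \gamma^2 L^2)\|x-y\|^2$. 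Combining the two pieces yields
\[
\|T(x) - T(y)\| \leq \Bigl(\lambda + \sqrt{1 - 2\gamma\mu + \gamma^2 L^2}\,\Bigr)\|x-y\|.
\]

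To close the argument I would optimize the contraction factor over $\gamma$. Minimizing the quadratic $1 - 2\gamma\mu + \gamma^2 L^2$ gives the optimal choice $\gamma = \mu/L^2$, at which the expression under the square root equals $1 - \mu^2/L^2$. With this choice the contraction factor becomes $\lambda + \sqrt{1 - \mu^2/L^2}$, which is strictly less than $1$ by the standing hypothesis \eqref{eq: lambda condition}. Hence $T$ is a contraction on the complete space $H$, and the Banach fixed-point theorem guarantees a unique fixed point, which by the equivalence established at the outset is the unique solution of the QVI \eqref{eq:QVI}.

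I expect the main obstacle to be the contraction estimate, specifically the clean separation of the two sources of error, namely the dependence of the constraint set $K(\cdot)$ on its argument (controlled by $\lambda$) versus the dependence through $\mathcal{A}$ (controlled by strong monotonicity and Lipschitz continuity), and then checking that the $\gamma$-optimized factor matches exactly the threshold appearing in \eqref{eq: lambda condition}. The fixed-point reformulation itself is routine once \eqref{eq:2.3} is invoked.
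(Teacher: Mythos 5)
Your argument is correct and is essentially the standard one behind this lemma: the paper itself states the result without proof (citing \cite{noor94}), but the contraction estimate you derive, $\|T(x)-T(y)\|\leq\bigl(\lambda+\sqrt{1-2\gamma\mu+\gamma^2L^2}\,\bigr)\|x-y\|$, is exactly the bound the authors reuse in \eqref{ee1}--\eqref{ee3} of Theorem \ref{th1}, and your choice $\gamma=\mu/L^2$ recovers the threshold $\lambda+\sqrt{1-\mu^2/L^2}<1$ precisely. The only points worth making explicit are that $\mu\leq L$ (so the square root is real), which follows from combining strong monotonicity with Lipschitz continuity, and that uniqueness for the QVI follows because every solution is a fixed point of $T$ for the particular $\gamma$ you fixed.
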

\noindent The fixed point formulation of the QVI \eqref{eq:QVI} is given by

\begin{lem}
Let $K(\cdot)$ be a set-valued mapping with nonempty, closed and convex values in $H$. Then $x_{*} \in K(x_{*})$ is a solution of the QVI \eqref{eq:QVI} if and only if for any $\gamma >0$ it holds that
$$
x_{*}=P_{K(x_{*})}(x_{*}-\gamma \mathcal{A}(x_{*})).
$$
\end{lem}

\noindent The following lemma is needed in our convergence analysis.

\begin{lem}\label{lm2}
If $x,y\in H$, we have
\begin{itemize}
\item[(i)]
 $2\langle x,y\rangle=\|x\|^{2}+\|y\|^{2}-\|x-y\|^{2}=\|x+y\|^{2}-\|x\|^{2}-\|y\|^{2}.$
\item[(ii)] Assume that $x,y\in H$ and $\alpha \in \mathbb{R}$. Then
\begin{eqnarray*}
\|\alpha x+(1-\alpha) y\|^2&=& \alpha\|x\|^2+(1-\alpha)\|y\|^2-\alpha(1-\alpha)\|x-y\|^2.
\end{eqnarray*}
\end{itemize}
\end{lem}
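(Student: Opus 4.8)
The plan is to reduce both parts to the single identity $\|z\|^2=\langle z,z\rangle$ together with the bilinearity and symmetry of the inner product; no deeper structure of $H$ beyond these algebraic properties is required.

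First I would establish (i) by expanding the two natural candidates. Writing $\|x-y\|^2=\langle x-y,x-y\rangle=\|x\|^2-2\langle x,y\rangle+\|y\|^2$ and solving for the cross term yields the first equality $2\langle x,y\rangle=\|x\|^2+\|y\|^2-\|x-y\|^2$. The identical computation applied to $\|x+y\|^2=\|x\|^2+2\langle x,y\rangle+\|y\|^2$ gives the second equality $2\langle x,y\rangle=\|x+y\|^2-\|x\|^2-\|y\|^2$.

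For (ii) I would expand the square of the convex combination directly, obtaining
\[
\|\alpha x+(1-\alpha)y\|^2=\alpha^2\|x\|^2+2\alpha(1-\alpha)\langle x,y\rangle+(1-\alpha)^2\|y\|^2,
\]
and then substitute $2\langle x,y\rangle=\|x\|^2+\|y\|^2-\|x-y\|^2$ from part (i) to clear the inner product. Collecting terms, the coefficient of $\|x\|^2$ becomes $\alpha^2+\alpha(1-\alpha)=\alpha$, the coefficient of $\|y\|^2$ becomes $(1-\alpha)^2+\alpha(1-\alpha)=1-\alpha$, and the leftover term is exactly $-\alpha(1-\alpha)\|x-y\|^2$, which is the claimed identity.

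There is essentially no genuine obstacle here: both parts are routine consequences of bilinearity, valid for arbitrary real $\alpha$ since no positivity of $\alpha$ or $1-\alpha$ is used. The only point meriting a little care is the coefficient bookkeeping in (ii), where it is cleaner to reuse part (i) to eliminate $\langle x,y\rangle$ than to expand the cross term a second time.
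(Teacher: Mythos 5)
Your proposal is correct and is the standard argument; the paper itself states Lemma \ref{lm2} without proof, treating it as a well-known polarization-type identity, and your expansion via bilinearity of the inner product (with the substitution of part (i) into part (ii)) is exactly the computation the authors implicitly rely on.
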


\section{Main Results}\label{Sec:Method}
\noindent
We introduce our inertial type gradient projection algorithm for solving QVI \eqref{eq:QVI} and obtain linear convergence results.  To begin with, let us assume that a parameter $\gamma \geq 0$ obeys the following condition:

\begin{asm}\label{Ass:VI}
\begin{equation}\label{eq: gamma condition}
\left|\gamma-\frac{\mu}{L^2}\right|<\frac{\sqrt{\mu^2-L^2\lambda(2-\lambda)}}{L^2}.
\end{equation}

\end{asm}

\noindent We now introduce an inertial type projection algorithm for solving the QVI \eqref{eq:QVI}.

\begin{algorithm}[H]
\caption{Inertial Type Gradient Projection Algorithm}\label{Alg:AlgL}
\begin{algorithmic}[1]
   \State  Choose $\theta_k \in (0,1)$ such that $0< a\leq \theta_k \leq b<1$ and pick
    $z_{-1}=z_0 \in H.$ Set $ k := 0 $.
   \State Given the current iterates $ z_{k-1}$ and $ z_k,$ compute
		\begin{eqnarray} \label{aa}
\left\{  \begin{array}{ll}
      &	y_{k-1}=z_k+\Big(\frac{1-2\theta_{k-1}}{\theta_{k-1}}\Big)(z_k-z_{k-1}),\\
      &x_k=P_{K(y_{k-1})}(y_{k-1}-\gamma \mathcal{A}(y_{k-1})),\\
      & z_{k+1}=(1-\theta_k)z_k+\theta_k x_k
      \end{array}
       \right.
      \end{eqnarray}
\State Set $ k\leftarrow k+1 $, and {\bf return to 2}.
\end{algorithmic}
\end{algorithm}

\begin{rem}\label{oluwa}$\,$\\
(a) Our proposed inertial-type gradient projection Algorithm \ref{Alg:AlgL} features an inertial factor $\frac{1-2\theta_{k-1}}{\theta_{k-1}} \in (-1,+\infty)$. In particular, if $\theta_{k-1} \in (\frac{1}{2},1)$ in Algorithm \ref{Alg:AlgL}, the inertial factor $\frac{1-2\theta_{k-1}}{\theta_{k-1}} \in (-1,0)$, while if $\theta_{k-1} \in (0,\frac{1}{2})$ in Algorithm \ref{Alg:AlgL}, the inertial factor $\frac{1-2\theta_{k-1}}{\theta_{k-1}} \in (0,+\infty)$.
Therefore, the inertial term in our algorithm \ref{Alg:AlgL} is not restricted to $[0,1]$, which was considered in other inertial-type projection algorithms proposed and studied in \cite{Copur,Jabeen,Shehu,ShehuOptim}.\\\\

\noindent
    (b) If $\theta_k=\frac{1}{3}$ in Algorithm \ref{Alg:AlgL}, then Algorithm \ref{Alg:AlgL} becomes an averaging reflected gradient projection algorithm, which is a similar algorithm studied in \cite{Shehu}.\\

\end{rem}

\noindent We now give our linear convergence results for Algorithm \ref{Alg:AlgL}.

\begin{thm}\label{th1}
Consider the QVI \eqref{eq:QVI} with $\mathcal{A}$ being $\mu$-strongly monotone and $L$-Lipschitz continuous and assume there exists $\lambda \geq 0$ such that \eqref{eq: lambda condition} holds.
Let $\{x_k\}$ and $\{z_k\}$ be generated by Algorithm \ref{Alg:AlgL} with $\gamma \geq 0$ satisfying \eqref{eq: gamma condition}. Then both $\{x_k\}$ and $\{z_k\}$ converge linearly to the unique solution $x_{*} \in K(x_{*})$ of the QVI \eqref{eq:QVI}.
\end{thm}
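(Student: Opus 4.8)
The plan is to establish linear convergence by tracking the distance $\|z_k - x_*\|$ to the unique solution and showing it contracts geometrically. First, I would exploit the fixed-point characterization: since $x_* = P_{K(x_*)}(x_* - \gamma \mathcal{A}(x_*))$, and $x_k = P_{K(y_{k-1})}(y_{k-1} - \gamma \mathcal{A}(y_{k-1}))$, I would estimate $\|x_k - x_*\|$ in terms of $\|y_{k-1} - x_*\|$. The key tool is the nonexpansiveness of the projection together with condition \eqref{eq: lambda condition}, which controls the variation of the projection operator across the different constraint sets $K(y_{k-1})$ and $K(x_*)$. Concretely, I would split
\begin{align*}
\|x_k - x_*\| &\leq \|P_{K(y_{k-1})}(y_{k-1} - \gamma \mathcal{A}(y_{k-1})) - P_{K(y_{k-1})}(x_* - \gamma \mathcal{A}(x_*))\| \\
&\quad + \|P_{K(y_{k-1})}(x_* - \gamma \mathcal{A}(x_*)) - P_{K(x_*)}(x_* - \gamma \mathcal{A}(x_*))\|,
\end{align*}
bounding the first term using nonexpansiveness of $P_{K(y_{k-1})}$ and the second using \eqref{eq: lambda condition} by $\lambda\|y_{k-1} - x_*\|$.

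Next I would analyze the gradient step term $\|(y_{k-1} - x_*) - \gamma(\mathcal{A}(y_{k-1}) - \mathcal{A}(x_*))\|$. Expanding the square via Lemma \ref{lm2}(i) and applying the $\mu$-strong monotonicity and $L$-Lipschitz continuity of $\mathcal{A}$ gives the standard bound
\[
\|(y_{k-1} - x_*) - \gamma(\mathcal{A}(y_{k-1}) - \mathcal{A}(x_*))\|^2 \leq (1 - 2\gamma\mu + \gamma^2 L^2)\|y_{k-1} - x_*\|^2.
\]
Combining this with the projection estimate yields $\|x_k - x_*\| \leq \big(\sqrt{1 - 2\gamma\mu + \gamma^2 L^2} + \lambda\big)\|y_{k-1} - x_*\|$. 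Setting $\rho := \sqrt{1 - 2\gamma\mu + \gamma^2 L^2} + \lambda$, I would verify that Assumption \ref{Ass:VI}, i.e. condition \eqref{eq: gamma condition}, is exactly the algebraic requirement forcing $\rho < 1$: rearranging $|\gamma - \mu/L^2| < \sqrt{\mu^2 - L^2\lambda(2-\lambda)}/L^2$ is equivalent to $1 - 2\gamma\mu + \gamma^2 L^2 < (1-\lambda)^2$, which gives the contraction factor strictly below one.

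The remaining and most delicate step is handling the inertial extrapolation $y_{k-1} = z_k + \big(\tfrac{1-2\theta_{k-1}}{\theta_{k-1}}\big)(z_k - z_{k-1})$ together with the averaging update $z_{k+1} = (1-\theta_k)z_k + \theta_k x_k$. Because the inertial factor can be negative and is not confined to $[0,1]$, the usual Opial/quasi-Fejér arguments do not apply directly; instead I would aim for a coupled recursion in the quantities $\|z_{k+1} - x_*\|$ and $\|z_k - z_{k-1}\|$ (or equivalently a Lyapunov function combining $\|z_k - x_*\|^2$ with a multiple of $\|z_k - z_{k-1}\|^2$). Using Lemma \ref{lm2}(ii) to expand the convex combination in $z_{k+1}$ and substituting the contraction bound for $\|x_k - x_*\|$, I would derive an inequality of the form $\|z_{k+1} - x_*\| \leq (1-\theta_k)\|z_k - x_*\| + \theta_k \rho \|y_{k-1} - x_*\|$ and then bound $\|y_{k-1} - x_*\|$ via the triangle inequality in terms of $\|z_k - x_*\|$ and $\|z_k - z_{k-1}\|$. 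The main obstacle is choosing the Lyapunov weights so that the resulting $2\times 2$ recursion matrix has spectral radius strictly less than one, uniformly in $k$; here the bounds $0 < a \leq \theta_k \leq b < 1$ and $\rho < 1$ must be combined carefully, since the coefficient $\tfrac{1-2\theta_{k-1}}{\theta_{k-1}}$ blows up as $\theta_{k-1} \to 0$, so the lower bound $a$ on $\theta_k$ is essential to keep the inertial contribution controlled. Once such a Lyapunov contraction $V_{k+1} \leq q\, V_k$ with $q \in (0,1)$ is established, linear convergence of both $\{z_k\}$ and $\{x_k\}$ to $x_*$ follows immediately.
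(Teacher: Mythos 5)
Your first two steps coincide with the paper's: the split of $\|x_k-x_*\|$ into a nonexpansiveness term plus a $\lambda\|y_{k-1}-x_*\|$ term from \eqref{eq: lambda condition}, the bound $1-2\gamma\mu+\gamma^2L^2$ from strong monotonicity and Lipschitz continuity, and the observation that \eqref{eq: gamma condition} is exactly $1-2\gamma\mu+\gamma^2L^2<(1-\lambda)^2$, i.e.\ $\beta:=\sqrt{1-2\gamma\mu+\gamma^2L^2}+\lambda<1$, are all correct and are what the paper does.

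The gap is in your treatment of the inertial step, which you leave as a plan (``choose Lyapunov weights so that the $2\times2$ recursion matrix has spectral radius less than one'') rather than a proof, and that plan does not go through in the stated generality. If you bound $\|y_{k-1}-x_*\|\le\|z_k-x_*\|+\bigl|\tfrac{1-2\theta_{k-1}}{\theta_{k-1}}\bigr|\,\|z_k-z_{k-1}\|$ by the triangle inequality and couple it with $\|z_{k+1}-z_k\|=\theta_k\|x_k-z_k\|$, the resulting nonnegative $2\times2$ matrix has Perron root below $1$ only under a condition of the form $1-\beta>2\beta|1-2\theta|$ (up to constants), which fails whenever $\beta$ is close to $1$ and $\theta_k$ is near $a$ or $b$; so your route would force an extra restriction tying $\theta_k$ to $\gamma$ that the theorem does not assume. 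The missing idea is structural, not a weight-tuning issue: since $z_k-z_{k-1}=\theta_{k-1}(x_{k-1}-z_{k-1})$, the extrapolated point collapses exactly to the convex combination
\begin{equation*}
y_{k-1}=(1-\theta_{k-1})x_{k-1}+\theta_{k-1}z_{k-1},
\end{equation*}
mirroring $z_{k}=(1-\theta_{k-1})z_{k-1}+\theta_{k-1}x_{k-1}$ with the weights swapped. Expanding both squared norms with Lemma \ref{lm2}(ii) and adding, the $\|x_{k-1}-z_{k-1}\|^2$ terms come in with favorable sign and the quantity $\|x_k-x_*\|^2+\|z_k-x_*\|^2$ contracts by a factor $\max\{\theta_k+(1-\theta_k)\beta^2,\,(1-\theta_k)+\theta_k\beta^2\}\le 1-(1-\beta^2)\min\{a,1-b\}<1$, with no residual $\|z_k-z_{k-1}\|$ term and hence no spectral-radius analysis needed. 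Without this identity (or an equivalent exact cancellation) your argument cannot reach the conclusion for all $\theta_k\in[a,b]\subset(0,1)$ and all $\gamma$ satisfying \eqref{eq: gamma condition}.
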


\begin{proof}
We obtain from Algorithm \ref{Alg:AlgL} that
\begin{eqnarray}\label{equi}
y_k&=& z_{k+1}+\Big(\frac{1-2\theta_k}{\theta_k}\Big)(z_{k+1}-z_k)\nonumber \\
   &=& (1-\theta_k)z_k+\theta_k x_k+\theta_k \Big(\frac{1-2\theta_k}{\theta_k}\Big)(x_k-z_k)\nonumber \\
   &=& (1-\theta_k)z_k+\theta_k x_k+(1-2\theta_k)(x_k-z_k) \nonumber \\
   &=& \Big((1-\theta_k)-(1-2\theta_k)\Big)z_k+(\theta_k+1-2\theta_k)x_k \nonumber\\
   &=& (1-\theta_k)x_k+\theta_k z_k.
\end{eqnarray}
Now, given the unique solution $x_{*}$ of \eqref{eq:QVI}, we obtain
\begin{eqnarray}\label{ee1}
  \|x_{k+1}-x_{*} \| &=& \|P_{K(y_k)}(w_k-\gamma \mathcal{A}(y_k))-P_{K(x_{*})}(x_{*}-\gamma \mathcal{A}(x_{*}))\| \nonumber \\
   &\leq&
  \|P_{K(y_k)}(w_k-\gamma \mathcal{A}(y_k))-P_{K(x_{*})}(y_k-\gamma \mathcal{A}(y_k))\|\nonumber \\
  &&+\|P_{K(x_{*})}(y_k-\gamma \mathcal{A}(y_k))-P_{K(x_{*})}(x_{*}-\gamma \mathcal{A}(x_{*}))\|\nonumber \\
   &\leq& \lambda\|y_k-x_{*}\|+\|y_k-x_{*}+\gamma (\mathcal{A}(x_{*})-\mathcal{A}(y_k))\|.
  \end{eqnarray}
By the fact that $\mathcal{A}$ is $\mu$-strongly monotone and $L-$Lipschitz continuous, we obtain
\begin{eqnarray}\label{ee2}
  \|y_k-x_{*}-\gamma (\mathcal{A}(x_{*})-\mathcal{A}(y_k))\|^2
  &=& \|y_k-x_{*}\|^2-2\gamma \langle \mathcal{A}(y_k)-\mathcal{A}(x_{*}), y_k-x_{*} \rangle \nonumber\\
  &&+\gamma^2\|\mathcal{A}(y_k)-\mathcal{A}(x_{*})\|^2 \nonumber\\
  &\leq& (1-2\mu\gamma+\gamma^2L^2)\|y_k-x_{*}\|^2.
  \end{eqnarray}
Combining \eqref{ee1} and \eqref{ee2}, we get
\begin{eqnarray}\label{ee3}
  \|x_{k+1}-x_{*}  \| &\leq& \lambda\|y_k-x_{*}\|+\sqrt{1-2\mu\gamma+\gamma^2L^2}\|y_k-x_{*}\|\nonumber\\
  &=& \beta\|y_k-x_{*}\|,
  \end{eqnarray}
  with
  \begin{equation}\label{beta}
    \beta:= \sqrt{1-2\mu\gamma+\gamma^2L^2}+\lambda \in (0,1).
  \end{equation}
By \eqref{equi} and Lemma \ref{lm2}, we obtain
\begin{eqnarray}\label{app4}
\|y_k-x_{*}\|^2&=& \|(1-\theta_k)(x_k-x_{*})+\theta_k(z_k-x_{*})\|^2\nonumber\\
&=&(1-\theta_k)\|x_k-x_{*}\|^2+\theta_k\|z_k-x_{*}\|^2\nonumber\\
&&-\theta_k(1-\theta_k)\|x_k-z_k\|^2
\end{eqnarray}
and
\begin{eqnarray}\label{app5}
\|z_{k+1}-x_{*}\|^2&=& \|(1-\theta_k)(z_k-x_{*})+\theta_k(x_k-x_{*})\|^2\nonumber\\
&=&(1-\theta_k)\|z_k-x_{*}\|^2+\theta_k\|x_k-x_{*}\|^2\nonumber\\
&& -\theta_k(1-\theta_k)\|x_k-z_k\|^2.
\end{eqnarray}
By these last two identities, we obtain from \eqref{ee3} that
\begin{eqnarray}\label{app4pre}
\|x_{k+1}-x_{*}\|^2+\|z_{k+1}-x_{*}\|^2&\leq& \beta^2\|y_k-x_{*}\|^2 \nonumber\\
&&+(1-\theta_k)\|z_k-x_{*}\|^2+\theta_k\|x_k-x_{*}\|^2 \nonumber\\
&&-\theta_k(1-\theta_k)\|x_k-z_k\|^2\nonumber\\
&=&(1-\theta_k)\beta^2\|x_k-x_{*}\|^2+\theta_k\beta^2\|z_k-x_{*}\|^2\nonumber\\
&&-\beta^2\theta_k(1-\theta_k)\|x_k-z_k\|^2\nonumber\\
&&+(1-\theta_k)\|z_k-x_{*}\|^2+\theta_k\|x_k-x_{*}\|^2\nonumber\\
&&-\theta_k(1-\theta_k)\|x_k-z_k\|^2\nonumber\\
&=&\beta^2\Big(\|x_k-x_{*}\|^2+\|z_k-x_{*}\|^2 \Big)\nonumber\\
&&+\theta_k(1-\beta^2)\|x_k-x_{*}\|^2+(1-\theta_k(1-\beta^2)\|z_k-x_{*}\|^2\nonumber\\
&&-\theta_k(1-\theta_k)(1+\beta^2)\|x_k-z_k\|^2\nonumber\\
&\leq& \beta^2\Big(\|x_k-x_{*}\|^2+\|z_k-x_{*}\|^2 \Big)\nonumber\\
&&+\theta_k(1-\beta^2)\|x_k-x_{*}\|^2+(1-\beta^2)(1-\theta_k)\|z_k-x_{*}\|^2\nonumber\\
&\leq& \beta^2\Big(\|x_k-x_{*}\|^2+\|z_k-x_{*}\|^2 \Big)\nonumber\\
&&+\theta_k(1-\beta^2)\Big(\|x_k-x_{*}\|^2+\|z_k-x_{*}\|^2 \Big)\nonumber\\
&&+(1-\beta^2)(1-\theta_k)\Big(\|x_k-x_{*}\|^2+\|z_k-x_{*}\|^2 \Big)\nonumber\\
&\leq& \beta^2\Big(\|x_k-x_{*}\|^2+\|z_k-x_{*}\|^2 \Big)\nonumber\\
&&+b(1-\beta^2)\Big(\|x_k-x_{*}\|^2+\|z_k-x_{*}\|^2 \Big)\nonumber\\
&&+(1-\beta^2)(1-a)\Big(\|x_k-x_{*}\|^2+\|z_k-x_{*}\|^2 \Big)\nonumber\\
&=&\varrho \Big(\|x_k-x_{*}\|^2+\|z_k-x_{*}\|^2 \Big),
\end{eqnarray}
where
$$
\varrho:= \max\Big\{\beta^2,b(1-\beta^2),(1-\beta^2)(1-a) \Big\} \in (0,1).
$$
\noindent Consequently,
\begin{eqnarray}\label{oppos}
\|x_{k+1}-x_{*}\|^2&\leq&\|x_{k+1}-x_{*}\|^2+\|z_{k+1}-x_{*}\|^2 \nonumber\\
&\leq&\varrho \Big(\|x_k-x_{*}\|^2+\|z_k-x_{*}\|^2 \Big)\nonumber \\
&\leq&\varrho^{k+1} \Big(\|x_0-x_{*}\|^2+\|z_0-x_{*}\|^2 \Big)\nonumber \\
&=&2\varrho^{k+1} \|x_0-x_{*}\|^2.
\end{eqnarray}
Hence, $\{x_k\}$  converges linearly to the unique solution $x_{*} \in K(x_{*})$ of the QVI \eqref{eq:QVI}. We can also show from \eqref{oppos} that $\{z_k\}$ also converges linearly to the unique solution $x_{*} \in K(x_{*})$ of the QVI \eqref{eq:QVI} since
\begin{eqnarray}\label{oppos2a}
\|z_{k+1}-x_{*}\|^2&\leq&\|x_{k+1}-x_{*}\|^2+\|z_{k+1}-x_{*}\|^2 \nonumber\\
&\leq&2\varrho^{k+1} \|x_0-x_{*}\|^2.
\end{eqnarray}
\end{proof}

\noindent In a special case when $K(x), x \in H$ is a "moving set". That is, the case when $K(x):=c(x)+K, x \in H$ where $c:H \rightarrow H$ is a $\lambda$-Lipschitz continuous mapping and $K\subset H$ is a nonempty, closed and convex subset. Then the Assumption \eqref{eq: gamma condition} is automatically satisfied with the same value of $\lambda$ (see \cite{Nesterov}). The following result hold in this case.

\begin{cor}
Consider the QVI \eqref{eq:QVI} with $\mathcal{A}$ being $\mu$-strongly monotone and $L$-Lipschitz continuous and suppose that $K(x):=c(x)+K, x \in H$ where $c:H \rightarrow H$ is a $\lambda$-Lipschitz continuous mapping and $K$ is a nonempty, closed and convex subset of $H$.
Let $\{x_k\}$ be any sequence generated by Algorithm \ref{Alg:AlgL} with $\gamma \geq 0$ satisfying \eqref{eq: gamma condition}. Then $\{x_k\}$ and $\{z_k\}$ converge linearly to the unique solution $x_{*} \in K(x_{*})$ of the QVI \eqref{eq:QVI}.
\end{cor}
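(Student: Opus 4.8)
The plan is to deduce the corollary directly from Theorem~\ref{th1}: every hypothesis of that theorem is already assumed here except the abstract projection condition \eqref{eq: lambda condition}, so the only task is to show that the moving-set structure $K(x)=c(x)+K$ with $c$ being $\lambda$-Lipschitz continuous forces \eqref{eq: lambda condition} to hold with exactly the same constant $\lambda$. Once this is verified, the full hypothesis set of Theorem~\ref{th1} is in force and the linear convergence of $\{x_k\}$ and $\{z_k\}$ to the unique solution $x_*$ follows at once.

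The first step is the translation identity for the metric projection onto a translate of a fixed convex set: for any fixed $z\in H$ and any $x\in H$,
\[
P_{K(x)}(z)=P_{c(x)+K}(z)=c(x)+P_K\big(z-c(x)\big).
\]
I would verify this by observing that the right-hand side lies in $c(x)+K$, and that for every point $c(x)+k\in c(x)+K$ (with $k\in K$) the characterization \eqref{eq:2.3} applied to $P_K(z-c(x))$ gives exactly the defining inequality for the right-hand side to be the projection of $z$ onto $c(x)+K$.

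Using this identity, fix $z$ and arbitrary $x,y\in H$ and write $u=c(x)$, $v=c(y)$, $p_u=P_K(z-u)$, $p_v=P_K(z-v)$, so that
\[
P_{K(x)}(z)-P_{K(y)}(z)=(u-v)+(p_u-p_v).
\]
The crude triangle inequality combined with nonexpansiveness of $P_K$ would only yield the constant $2\lambda$, so the actual work is to show that the map $v\mapsto v+P_K(z-v)$ is nonexpansive. I would obtain this by writing \eqref{eq:2.3} for $p_u$ with test point $p_v$ and for $p_v$ with test point $p_u$, and adding the two inequalities, which produces $\langle u-v,\,p_u-p_v\rangle\le-\|p_u-p_v\|^2$. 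Expanding $\|(u-v)+(p_u-p_v)\|^2$ and inserting this bound gives $\|(u-v)+(p_u-p_v)\|^2\le\|u-v\|^2-\|p_u-p_v\|^2\le\|u-v\|^2$, whence $\|P_{K(x)}(z)-P_{K(y)}(z)\|\le\|c(x)-c(y)\|\le\lambda\|x-y\|$, which is precisely \eqref{eq: lambda condition}.

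I expect the main obstacle to be exactly this nonexpansiveness estimate: the sharp constant $\lambda$ is recovered only by exploiting the firm nonexpansiveness encoded in the obtuse-angle inequality \eqref{eq:2.3}, not the weaker $1$-Lipschitz property of $P_K$ alone. With \eqref{eq: lambda condition} secured, its second requirement $\lambda+\sqrt{1-\frac{\mu^2}{L^2}}<1$ is inherited, the set of admissible $\gamma$ obeying \eqref{eq: gamma condition} is nonempty, and Theorem~\ref{th1} delivers the linear convergence of $\{x_k\}$ and $\{z_k\}$ to $x_*$.
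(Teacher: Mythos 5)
Your proposal is correct and follows essentially the same route as the paper: reduce the corollary to Theorem \ref{th1} by showing that the moving set $K(x)=c(x)+K$ satisfies \eqref{eq: lambda condition} with the same constant $\lambda$. The only difference is that the paper does not prove this fact at all --- it simply cites Nesterov and Scrimali \cite{Nesterov} --- whereas you supply the underlying argument (the translation identity $P_{c(x)+K}(z)=c(x)+P_K(z-c(x))$ together with the firm-nonexpansiveness estimate $\langle u-v,p_u-p_v\rangle\le-\|p_u-p_v\|^2$), which is indeed the standard proof behind that citation and is carried out correctly. One minor caveat: your claim that the second half of \eqref{eq: lambda condition}, namely $\lambda+\sqrt{1-\mu^2/L^2}<1$, is ``inherited'' is slightly loose --- satisfiability of \eqref{eq: gamma condition} only forces $\mu^2>L^2\lambda(2-\lambda)$, i.e.\ $|1-\lambda|>\sqrt{1-\mu^2/L^2}$, which yields the desired inequality only under the additional (implicit) assumption $\lambda<1$; but this imprecision is exactly mirrored in the paper, which asserts without qualification that the condition is ``automatically satisfied.''
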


\begin{rem}$\,$\\
(a) In the inertial type gradient projection algorithms proposed in \cite{Copur,Jabeen,ShehuOptim}, only strong convergence results are obtained with no linear rate of convergence discussed. In this paper, we propose an inertial type gradient projection algorithm with linear convergence results given. \\[-1mm]

\noindent
(b) In the case when $\theta_k=\frac{1}{2}$ in our Algorithm \ref{Alg:AlgL}, our algorithm becomes a special case of the algorithms studied in
 \cite{Mijajlovic2018,ShehuOptim}.
\end{rem}

\section{Numerical Examples}\label{Sec:Numerics}
\noindent We give some numerical implementations of our proposed Algorithm \ref{Alg:AlgL} and give comparisons with some existing methods in the literature.   All codes were written in MATLAB R2023a and performed on a PC Desktop Intel Core i5-8265U CPU \@ 1.60GHz   1.80 GHz, RAM 16.00GB. We compare Algorithm \ref{Alg:AlgL} with the proposed algorithms in \cite{ajm18,Copur,Shehu,ShehuOptim}. \\

\noindent
We choose to use the test problem library QVILIB taken from \cite{fks12}; the feasible map $K$ is assumed to be given by
$K(x):=\{z \in \mathbb{R}^n \, : \, g(z,x) \leq 0 \}$.
We implemented Algorithm \ref{Alg:AlgL} in Matlab. We implemented the projection over a convex set as the solution of a convex program. We considered the following performance measures for optimality and feasibility
$$
\text{opt}(x) := - \min_z \{{\cal F}(x)^T (z-x) \, : \, z \in K(x) \}, \;\; \text{feas}(x) := \|\max\{0,g(x,x)\}\|_\infty.
$$
A point $x^*$ is considered a solution of the QVI if the optimality measure opt$(x^*) \leq 1e$-4 and feasibility measure feas$(x^*) \leq 1e$-4. For solving the QVI, we utilized the built-in function \texttt{fmincon} with `sqp' algorithm as its internal method, setting the maximum iteration count to 1000. The QVILIB \cite{fks12} provides a comprehensive collection of test problems specifically designed for evaluating algorithms employed in solving QVI. These problems encompasses a wide range of scenarios, including academic models, real-world applications and discretized versions of infinite-dimensional QVIs that model various engineering and physical phenomena. Furthermore, the library offers an M-file named \texttt{startinPoints}, which facilitates obtaining initial strating points for each test problem.
  The feasible set $K(x)$ is defined as the intersection of a fixed set $\bar{K}$ and a moving set $\tilde{K}(x)$ that depends on the point $x$ given by:
\begin{eqnarray*}
	\bar{K} &:=& \{y \in \mathbb{R}^k \; | \; g^{I}(y) \leq 0,\; M^{I}y + v^{I} =0\}, \\
	\tilde{K}(x)&=& \{y \in \mathbb{R}^k \; | \; g^{P}(y,x) \leq 0, \; M^{P}(x)y + v^{P}(x) = 0 \}.
\end{eqnarray*}
\noindent The comprehensive definitions of each problem can be found in \cite{fks12}; however, we provide a brief description of each problem in Table \ref{tab_problem}.

\begin{table}[h!]
	\centering
	\caption{Description of test problems used in the experiments}\label{tab_problem}
	\begin{tabular}{ccccccc}
		\toprule
		Problem name & $n$ & $m_I$ & $p_I$ & $m_P$ & $p_P$ & n(start) \\
		\midrule
		\texttt{OutZ40} & 2 & 4 & 0 & 2 & 0 & 3 \\
		\texttt{OutZ41} & 2 & 4 & 0 & 2 & 0 & 3 \\
		\texttt{OutZ42} & 4 & 4 & 0 & 4 & 0 & 4 \\
		\texttt{OutZ43} & 4 & 0 & 0 & 4 & 0 & 3 \\
		\texttt{OutZ44} & 4 & 0 & 0 & 4 & 0 & 3 \\
		\texttt{MovSet1A} & 5 & 0 & 0 & 1 & 0 & 2 \\
		\texttt{MovSet1B} & 5 & 0 & 0 & 1 & 0 & 2 \\
		\texttt{MovSet2A} & 5 & 0 & 0 & 1 & 0 & 2 \\
		\texttt{MovSet2B} & 5 & 0 & 0 & 1 & 0 & 2 \\
		\texttt{Box1A} & 5 & 0 & 0 & 10 & 0 & 2 \\
		\texttt{Box1B} & 5 & 0 & 0 & 10 & 0 & 2 \\
		\texttt{BiLin1A} & 5 & 10 & 0 & 3 & 0 & 2 \\
		\texttt{BiLin1B} & 5 & 10 & 0 & 3 & 0 & 2 \\
		\texttt{WalEq1} & 18 & 18 & 1 & 5 & 0 & 2 \\
		\texttt{WalEq2} & 105 & 105 & 1 & 20 & 0 & 2 \\
		\texttt{WalEq3} & 186 & 186 & 1 & 30 & 0 & 2 \\
		\texttt{WalEq4} & 310 & 310 & 1 & 30 & 0 & 2 \\
		\texttt{WalEq5} & 492 & 492 & 1 & 40 & 0 & 2 \\
		\texttt{Wal2} & 105 & 107 & 0 & 20 & 0 & 2 \\
		\texttt{Wal3} & 186 & 188 & 0 & 30 & 0 & 2 \\
		\texttt{LunSSVI1} & 501 & 1002 & 1 & 0 & 0 & 2 \\
		\texttt{OutKZ41} & 82 & 82 & 0 & 82 & 0 & 2 \\
		\bottomrule
	\end{tabular}
\end{table}
In Table \ref{tab_problem}, the first column contains the name of the problem, the second column ($n$) contains the number of variables in the problem, column $m_{I}$ contains the number of inequality constraints defining $\bar{K}$, column $p_{I}$ contains the number of linear equalities in $\bar{K},$ the column $m_P$ contains the number of inequality constraints defining $\tilde{K}(x)$, the column the $p_{P}$ is the number of equalities in the definition of $\tilde{K}(x)$ and the last column n(start) is the number of starting points for the problem.

In order to compare the performance of the algorithms, we used the so-called performance profiles, see \cite{DolanMore2002}, based on the number of iterations and execution time of the algorithm. Let $t_{s,i}>0$ denote the comparison metric (related to the current performance index) of solver $s \in \mathcal S$ for addressing instance $i \in \mathcal I$ of the problem, where $\mathcal S$ represents the set of comparing algorithms and $\mathcal I$ denotes the various problems with different starting points.
We define the performance ratio by 
\begin{equation*}
	\forall s\in\mathcal S,\,\forall i\in\mathcal I\colon\quad
	r_{s,i} := \dfrac{t_{s,i}}{\min\{t_{s',i}\,|\, s' \in \mathcal S\}}.
\end{equation*}
This quantity is called the ratio of the performance of solver $s\in \mathcal S$ to solve problem $i\in \mathcal I$ 
compared to the best performance of any other algorithm in $\mathcal S$ to solve problem $i$. 
The cumulative distribution function $\rho_s\colon[1,\infty)\to[0,1]$ of the current performance index linked with solver $s\in\mathcal S$ is defined as follows:
\begin{equation*}
	\forall \tau \in[1,\infty)\colon\quad
	\rho_{s}(T) := \frac{|\{i \in \mathcal I\,|\, r_{s,i} \leq T\}|}{|\mathcal I|}.
\end{equation*}
The performance profile for a specific performance index displays the graphical representations of all the functions $\rho_{s}$, where $s$ varies across the set $\mathcal{S}$.
The value $\rho_{s}(1)$ indicates the proportion of problem instances where solver $s\in\mathcal S$ exhibits the best performance. For any arbitrary $\tau\geq 1$, $\rho_s(T)$ represents the fraction of problem instances where solver $s\in\mathcal S$ achieves at most $T$ times the best performance.

In our experiments, we choose the following control parameters for the algorithms:
\begin{itemize}
	\item Proposed Alg: $\gamma = 0.5, \theta_k = \frac{k}{5(k+1)};$
	\item Antipin et al. Alg: $\gamma = 0.5;$
	\item Mijajlovic et al Alg. 1: $\gamma = 0.5, \alpha_k = \frac{1}{k+1};$
	\item Mijajlovic et al. Alg. 2: $\gamma = 0.5, \alpha_k = \frac{1}{k+1}, \beta_k = \frac{3k}{7k+9};$
	\item Shehu et al Alg: $\gamma = 0.5, \theta_k = \frac{k}{5(k+1)}, \alpha_k = \frac{1}{k+1}.$
\end{itemize}

Figure \ref{fig1} provides a comprehensive view of the performance profile analysis, focusing on the number of iterations required by each algorithm.  The result highlights the proposed Algorithm as the standout performer, demonstrating superior efficiency with the least number of iterations across nearly 82\% of the experimental scenarios. This suggests that the proposed Algorithm consistently converges more rapidly compared to its counterparts, making it a compelling choice for solving the QVI problems. In comparison, the Shehu et al Algorithm emerges as the second-best performer, showcasing the best performance in approximately 64\% of the experiments. While not as dominant as the proposed Algorithm, it shares similar number of iterations with the proposed algorithm in significant number of cases. Furthermore, the Antipin et al Algorithm achieves the best performance in roughly 57\% of the experiments, the Mijajlovic et al Algorithm 2 has best performance in 53\% of the experiments, while Mijajlovic et al Algorithm 1 has best performance in about 51\% of the experimental scenarios. This is collaborated with the average results shown in Table \ref{tab2}. Although Antinpin et al. Alg has lower average number of iteration than Mijajlovic et al Alg 1 and 2, this is because Antipin et al. Alg. has smaller figures at instances it achieved successes compare to the Mijajlovic et al algorithms.

Figure \ref{fig2} shows the performance profile of the algorithm based on the time of execution. The result shows that the proposed algorithm has the least time of execution for about 45\% of the scenarios. This is follow by Shehu et al algorithm which shares very closed figures with the rest of the comparing algorithms. This further highlights the advantage of the proposed algorithm over the rest of the algorithms. The average execution time presented in Table \ref{tab2} indicates that the Antipin et al Algorithm exhibits the longest average execution time. This outcome aligns with expectations, as the Antipin et al. Algorithm involves computing two projections per iteration utilizing the optimization solver, which inherently consumes additional computational resources. Similarly, the Mijajlovic et al Algorithm 2 also computes two projections in each iteration, contributing to a higher average execution time. However, it's worth noting that in this case, the output is regularized by the control parameters $\alpha_{k}$ and $\beta_k$, which may mitigate some of the computational overhead associated with the additional projections.

Figure \ref{fig3} shows the number of instances the algorithm terminate due to the stopping criterion. Recall that the algorithm is terminated if a solution of the QVI problem is found (Termination 1) or the computation reaches the maximum iteration (Termination 2). From Figure \ref{fig3}, we observe that the Proposed Algorithm terminated due to Termination 1 in 39 instances, which accounts for approximately 88.6\% of the total cases. Similarly, Mijajlovic et al. Algorithms 1 and 2, along with Antipin et al. Algorithm, stopped due to Termination 1 in 37 instances, representing 84.1\% of the experiments. Additionally, the Shehu et al. Algorithm reached Termination 1 in 36 instances, constituting about 81.8\% of the total scenarios. These findings underscore the computational advantage of the Proposed Algorithm, as it consistently terminates due to finding a solution to the QVI problem in a higher proportion of instances compared to the other algorithms. This also suggests that the Proposed Algorithm converges more efficiently, leading to quicker termination based on achieving the desired solution.

Figure \ref{fig4} shows the result of the experimental order of convergence (EOC) of each algorithm. The EOC is calculated by computing
\[  EOC := \max \left(\frac{\log \|x_{k+1} - x_{k}\|}{\log \|x_{k}-x_{k-1}\|}, \frac{\log \|x_{k+2}-x_{k+1}\|}{\log \|x_{k+1}-x_{k}\|}\right). \]
This formula provides an estimation of how quickly the error (or performance measure) decreases as the algorithm progresses. A higher value of EOC indicates faster convergence. From Figure \ref{fig4}, we observe that the proposed algorithm achieves the highest EOC value in 11 instances. In 31 instances, it shares equivalent EOC values with other algorithms, indicating comparable convergence rates. However, in 2 instances, the proposed algorithm exhibits a lower EOC value. Interestingly, Mijajlovic et al Algorithm 2 attains the highest EOC value in only 1 instance. This outlier occurrence could potentially be attributed to the influence of regularization parameters integrated within the algorithm, which may have impacted its convergence behavior in this particular case.

\begin{table}[h!]
	\centering
	\caption{Summary of numerical results}\label{tab2}
	\begin{tabular}{ccccc}
		\toprule
		 & Av. Iter & Av. Time  & \# Feas sol. & \# No Feas. sol. \\
		\midrule
		Proposed Alg. & 4.5455 & 4.6086 & 39 & 5 \\
		Mijajlovic et al Alg. 1 & 6.7045 & 8.5337 & 37 & 7 \\
		Mijajlovic et al Alg. 2 & 7.3863 & 14.8086 & 37 & 7 \\
		Shehu et al Alg. & 6.1818 & 6.2171 & 36 & 8 \\
		Antipin et al Alg. & 7.8409 & 18.2486 & 37 & 7 \\
		\bottomrule
	\end{tabular}
\end{table}

 \begin{figure}
 	\begin{center}
 		\includegraphics[height=8.0cm]{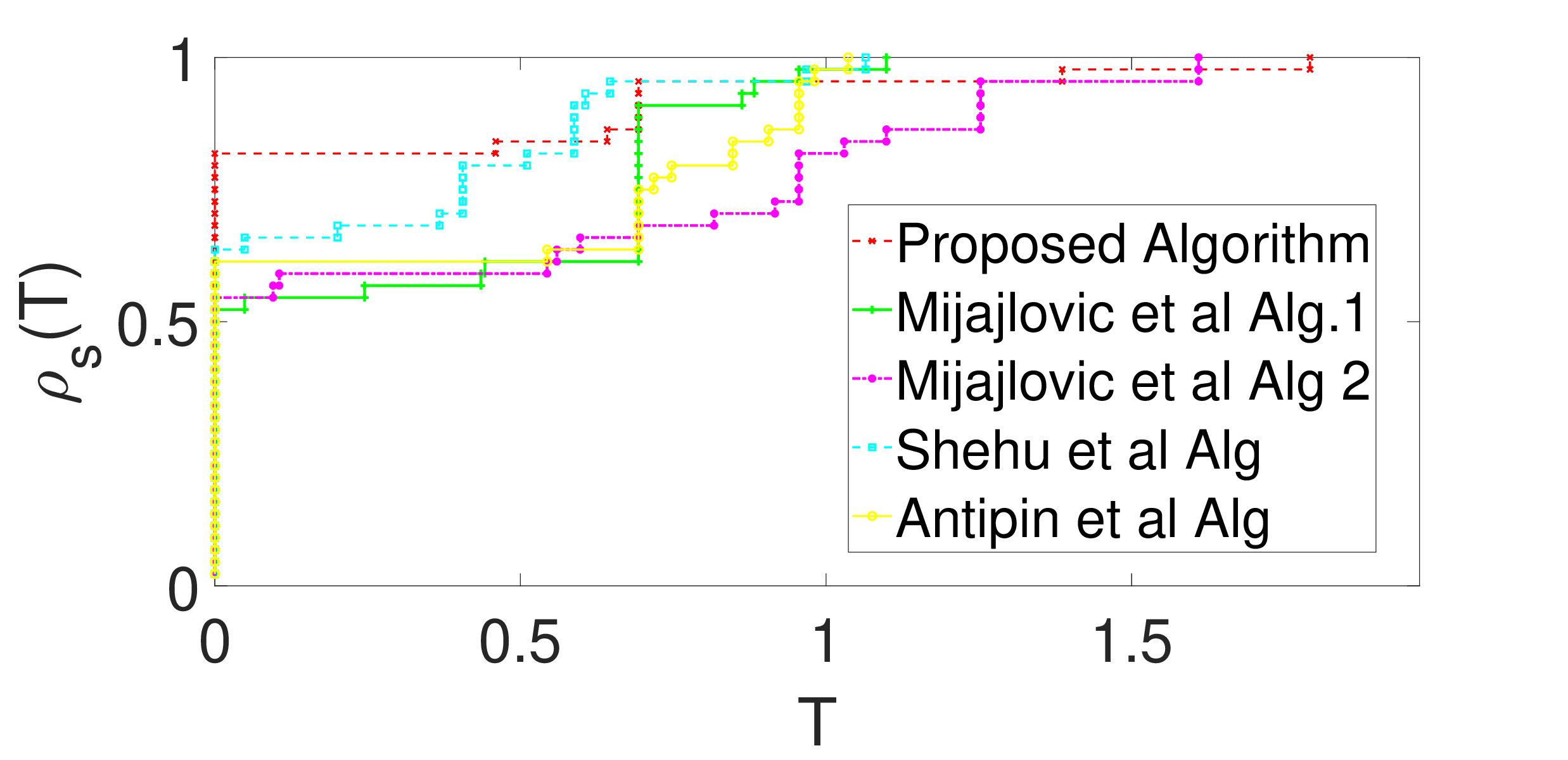}	
 		\caption{Performance profile based on number of iterations}	\label{fig1}
 	\end{center}
 \end{figure}

 \begin{figure}
	\begin{center}
		\includegraphics[height=8.0cm]{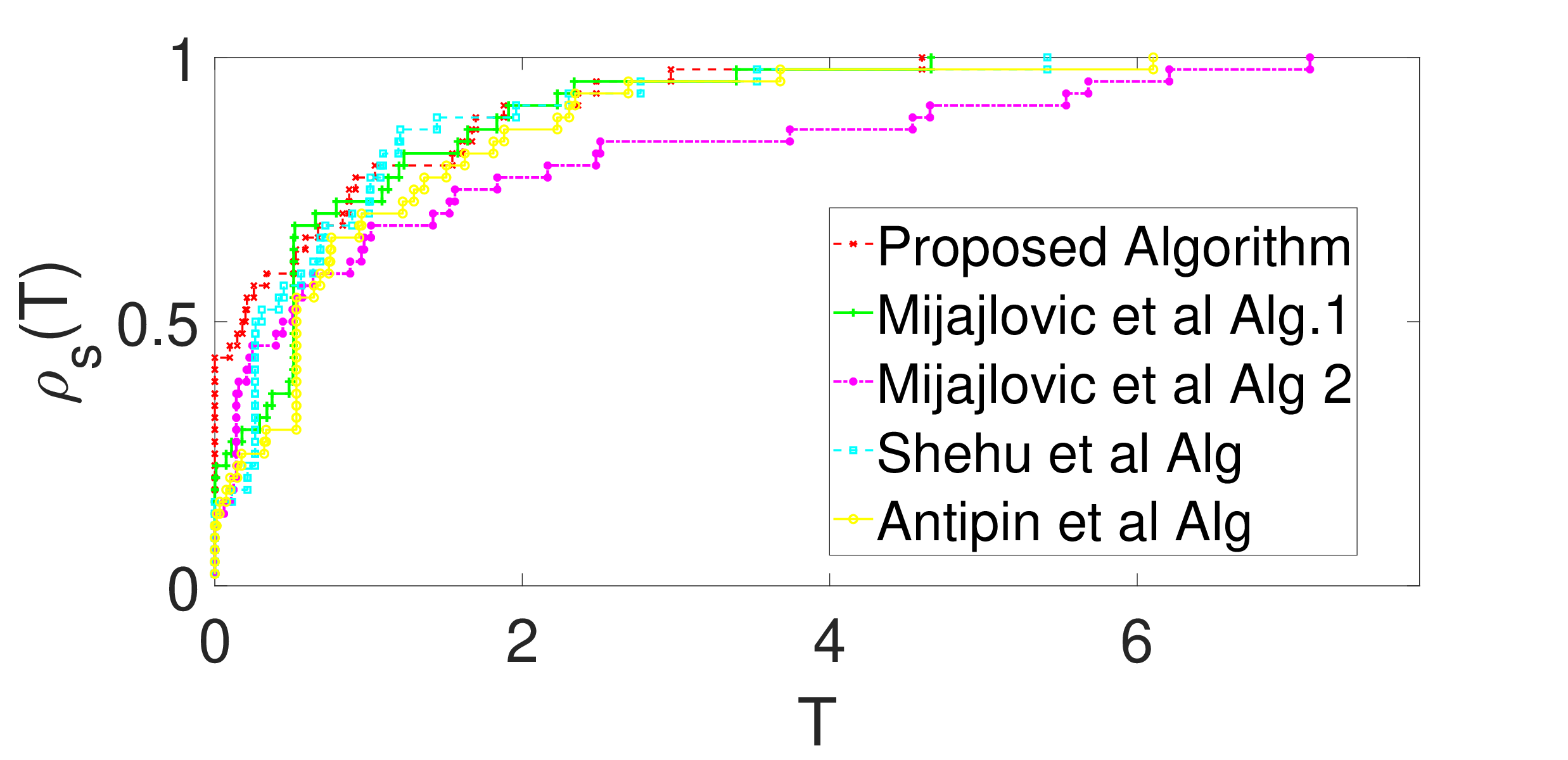}	
		\caption{Performance profile based on the time of execution}	\label{fig2}
	\end{center}
\end{figure}

 \begin{figure}
	\begin{center}
		\includegraphics[height=8.0cm]{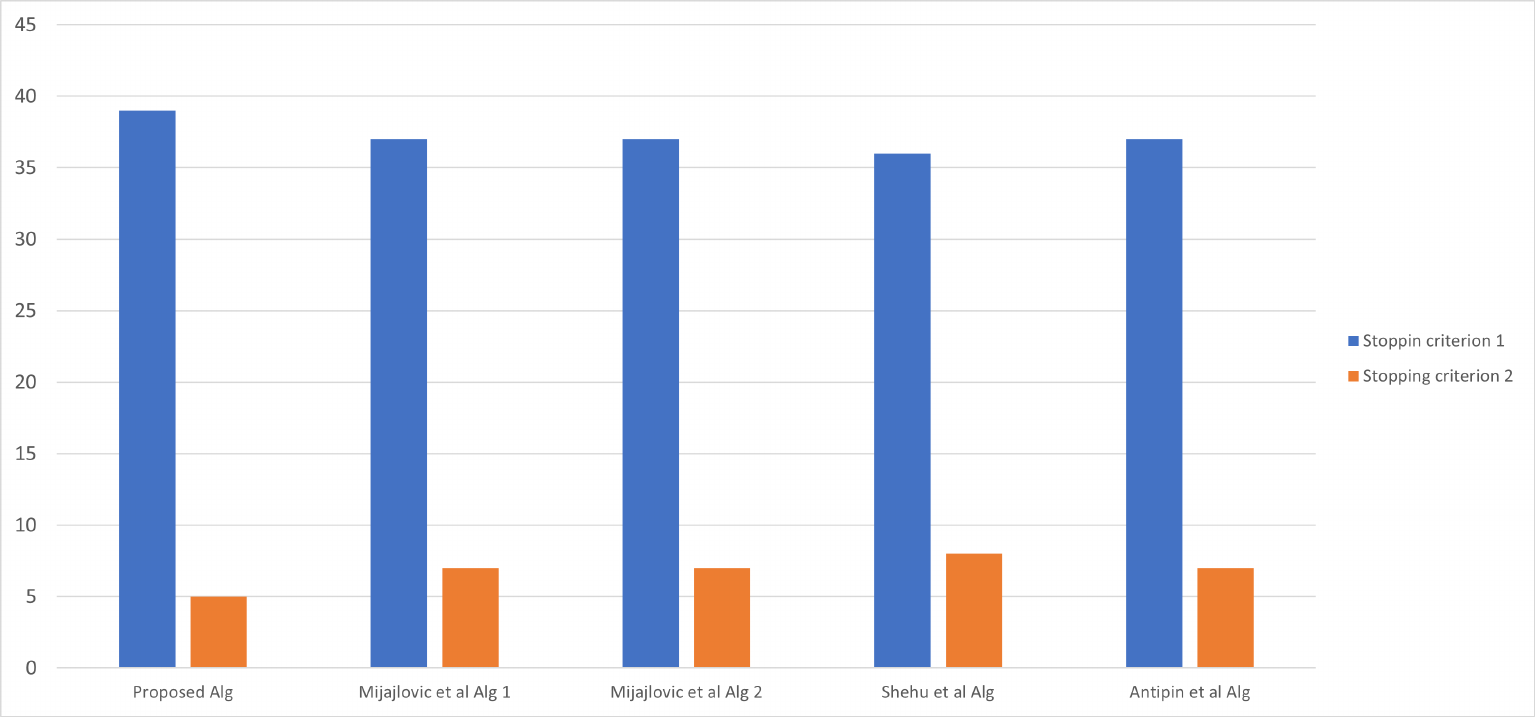}	
		\caption{Numerical results showing the feasibility of solution based on the reason of termination}	\label{fig3}
	\end{center}
\end{figure}

\begin{figure}
	\begin{center}
		\includegraphics[height=8.0cm]{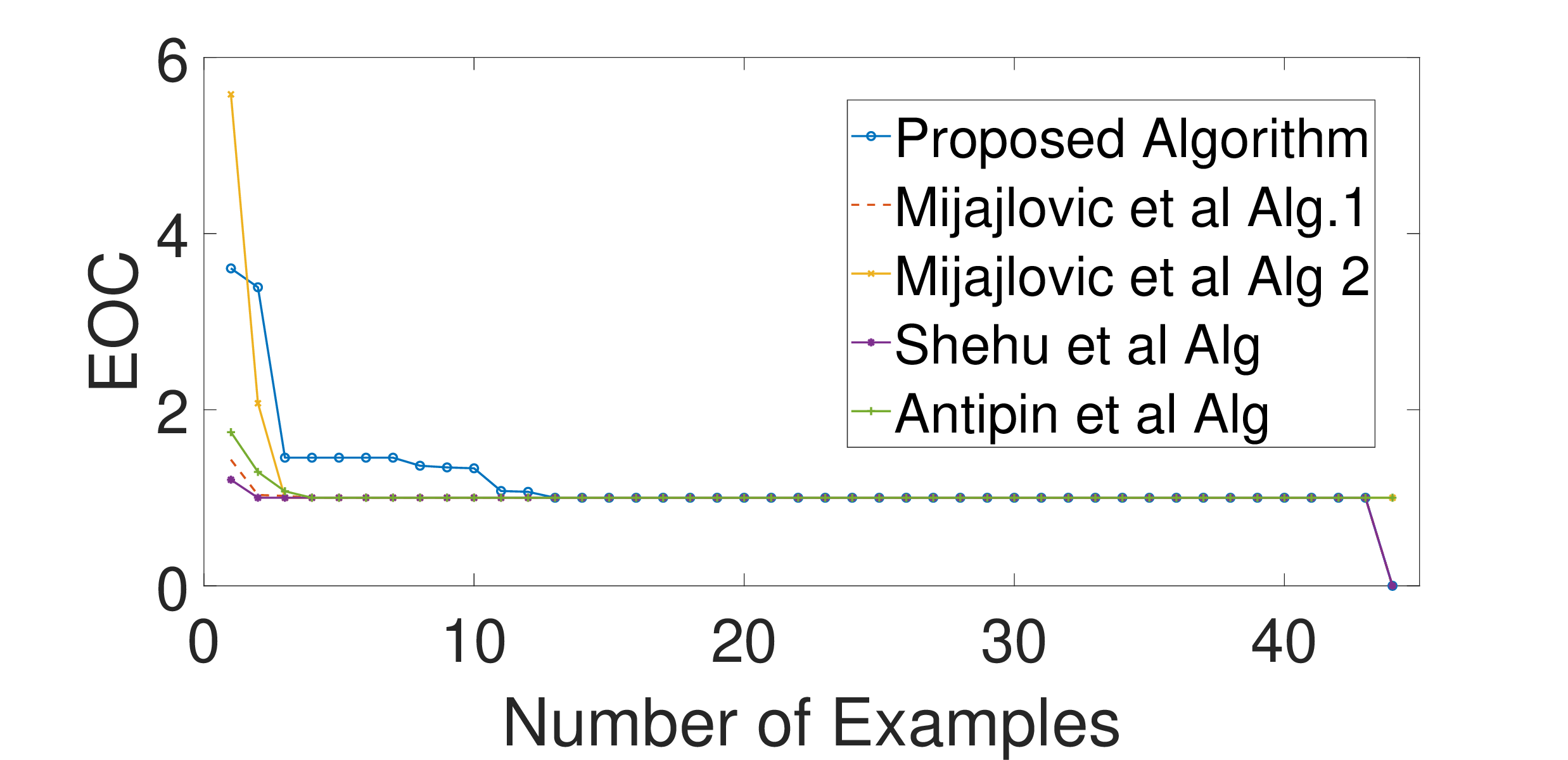}	
		\caption{Experimental order of convergence of algorithms}	\label{fig4}
	\end{center}
\end{figure}

\section{Conclusion}\label{Sec:Final}
\noindent
In this paper, we introduced an inertial type gradient projection algorithm for solving quasi-variational inequalities in Hilbert spaces and obtain its linear convergence rate under strong monotonicity of the operator.  This result complements other results in the literature for inertial type gradient projection algorithms to solve quasi-variational inequalities where only strong convergence results are obtained.
We also showed that the inertial factor in our proposed inertial type gradient projection algorithm could take both negative and non-negative values unlike many other inertial type gradient projection algorithms for quasi-variational inequalities in the literature.
The numerical comparisons of the proposed algorithm showed that our proposed inertial type gradient projection algorithm is efficient and outperform some popular related inertial type gradient projection algorithms in the literature for quasi-variational inequalities.

\section*{Disclosure statement}

\subsection*{Ethical Approval and Consent to participate}
All the authors gave ethical approval and consent to participate in this article.
\subsection*{Consent for publication}
All the authors gave consent for the publication of identifiable details to be published in the journal and article.

\subsection*{Code availability} The Matlab codes employed to run the numerical experiments are available on request.

\subsection*{Availability of supporting data}
Data sharing is not applicable to this article as no datasets were generated or analyzed
during the current study.
\subsection*{Competing interests}
The authors declare no competing interests.
\subsection*{Funding}
Not Applicable.

\subsection*{Authors' contributions}
Y.Y. and Y.S. wrote the manuscript and L.O.J. prepared the all the figures and tables.

\subsection*{Acknowledgments}
Not Applicable.


\begin{thebibliography}{111}
 \bibitem{alvarez} Alvarez, F.: Weak convergence of a relaxed and inertial hybrid projection-proximal point algorithm for maximal monotone operators in Hilbert space.  SIAM J. Optim. {\bf 14}, 773--782 (2003)

\bibitem{Alvarez} Alvarez, F., Attouch, H.: An inertial proximal method for maximal monotone operators via discretization of a nonlinear oscillator with damping.  Set-Valued Anal. {\bf 9}, 3--11 (2001)

\bibitem{ajm18} Antipin, A. S., Ja\'{c}imovi\'{c}, M., Mijajlovi, N.: Extragradient method for solving quasivariational inequalities. Optimization \textbf{67}, 103--112 (2018)

\bibitem{Antipin2} Antipin, A. S., Ja\'{c}imovi\'{c}, M., Mijajlovi, N.: A second-order iterative method for solving quasi-variational inequalities. Comp. Math. Math. Phys. \textbf{53}, 258--264 (2013)

\bibitem{Attouch} Attouch, H., Goudon, X., Redont, P.: The heavy ball with friction. I. The continuous dynamical system. Commun. Contemp. Math. {\bf 2}, 1--34 (2000)

\bibitem{Attouch2} Attouch, H., Czarnecki, M. O.: Asymptotic control and stabilization of nonlinear oscillators with non-isolated equilibria. J. Diff. Equations {\bf 179}, 278--310 (2002)

\bibitem{Attouch3}  Attouch, H., Peypouquet, J., Redont, P.: A dynamical approach to an inertial forward-backward algorithm for convex minimization.  SIAM J. Optim. {\bf 24}, 232--256 (2014)


\bibitem{Aussel} Aussel, D., Sagratella, S.: Sufficient conditions to compute any solution of a quasivariational inequality via a variational inequality. Math. Methods Oper. Res. {\bf 85}, 3--18 (2017)

\bibitem{Beck}  Beck, A., Teboulle, M.: A fast iterative shrinkage-thresholding algorithm
for linear inverse problems. SIAM J. Imaging Sci.  {\bf 2}, 183--202 (2009)

\bibitem{Bot}  Bo\c{t}, R. I., Csetnek, E. R., Hendrich, C.: Inertial Douglas-Rachford splitting for monotone inclusion.  Appl. Math. Comput. {\bf 256}, 472--487 (2015)

\bibitem{Bot2} Bo\c{t}, R. I., Csetnek, E. R.: An inertial alternating direction method of multipliers.  Minimax Theory Appl. {\bf 1}, 29--49 (2016)

\bibitem{Bot3} Bo\c{t}, R. I., Csetnek, E. R.: An inertial forward-backward-forward primal-dual splitting algorithm for solving monotone inclusion problems. Numer. Algorithms {\bf 71},  519--540 (2016)

\bibitem{CalatroniChambolle}
Calatroni, L.; Chambolle, A. Backtracking strategies for accelerated descent methods with smooth composite objectives. SIAM J. Optim. \textbf{29}, 1772--1798 (2019).

\bibitem{ChambollePock}
  Chambolle, A.; Pock, T. An introduction to continuous optimization for imaging. Acta Numer. \textbf{25}, 161--319 (2016).

\bibitem{CChen} Chen, C., Chan, R. H., Ma, S., Yang, J.: Inertial proximal ADMM for linearly constrained separable convex optimization.  SIAM J. Imaging Sci. {\bf 8}, 2239--2267 (2015)

\bibitem{Copur}
\d{C}opur, A. K.; Hacıo\u{g}lu, E.; G\"{u}rsoy, F.; Ert\"{u}rk, M. An efficient inertial type iterative algorithm to approximate the solutions of quasi variational inequalities in real Hilbert spaces. J. Sci. Comput. 89, 50 (2021).

\bibitem{DolanMore2002} Dolan, E. D.; More, J. J., Benchmarking optimization software with performance profiles, Math. Programming {\bf 91}(1), 201–213 (2002).

\bibitem{Facchinei1} Facchinei, F., Kanzow, C., Karl, S., Sagratella, S.: The semismooth Newton method for the solution of quasi-variational inequalities. Comput. Optim. Appl. {\bf 62}, 85--109 (2015)

\bibitem{Facchinei2} Facchinei, F., Kanzow, C., Sagratella, S.: Solving quasi-variational inequalities via their KKT conditions. Math. Program. {\bf 144}, 369--412 (2014)

\bibitem{fks12} Facchinei, F., Kanzow, C., Sagratella, S.:  QVILIB: a library of quasi-variational inequality
test problems.  Pacific J. Optim. \textbf{9}, 225-250 (2013).


\bibitem{Fichera63} Fichera, G.: Sul problema elastostatico di Signorini con ambigue condizioni al contorno (English translation: "On Signorini's elastostatic problem with ambiguous boundary conditions"). Atti Accad. Naz. Lincei, VIII. Ser., Rend., Cl. Sci. Fis. Mat. Nat. \textbf{34},  138--142 (1963)

\bibitem{Fichera64} Fichera, G.: Problemi elastostatici con vincoli unilaterali: il problema di Signorini con ambigue condizioni al contorno (English translation: "Elastostatic problems with unilateral constraints: the  Signorini's problem with ambiguous boundary conditions"). Atti Accad. Naz. Lincei, Mem., Cl. Sci. Fis. Mat. Nat., Sez. I, VIII. Ser. \textbf{7}, 91--140 (1964)

\bibitem{FloreaVorobyov}
 Florea, M.I.; Vorobyov, S.A. An accelerated composite gradient method for large-scale composite
objective problems. IEEE Trans. Signal Process. \textbf{67}, 444--459 (2019).


\bibitem{FloreaVorobyov2}
Florea, M.I.; Vorobyov, S.A. A generalized accelerated composite gradient method: uniting Nesterov's
fast gradient method and Fista. IEEE Trans. Signal Process. \textbf{68}, 3033--3048 (2020).

\bibitem{GarnerZhang} Garner, C.; Zhang, S. Linearly-Convergent FISTA Variant for Composite
Optimization with Duality. J. Sci. Comput 94, 65 (2022).

\bibitem{GR84} Goebel, K., Reich, S.: Uniform convexity, hyperbolic geometry and
non-expansive mappings. Marcel Dekker Inc, U.S.A., 1984

\bibitem{Jabeen} Jabeen, S., Bin-Mohsin, B., Noor,  M. A., Noor, K. I. Inertial projection methods for solving general quasi-variational inequalities. AIMS Math. 6, 1075-1086 (2021).


\bibitem {KS80} Kinderlehrer, D., Stampacchia, G.: An Introduction to Variational Inequalities and Their Applications. Academic Press, New
York-London, 1980

\bibitem{Latorre} Latorre, V., Sagratella, S.: A canonical duality approach for the solution of affine quasi-variational inequalities. J. Global Optim. {\bf 64}, 433--449 (2016)


\bibitem{Lorenz} Lorenz, D. A., Pock, T.: An inertial forward-backward algorithm for monotone inclusions.  J. Math. Imaging Vision {\bf 51}, 311--325 (2015)

\bibitem{Mainge2}  Maing\'{e}, P. E.: Regularized and inertial algorithms for common fixed points of nonlinear operators.  J. Math. Anal. Appl. {\bf 344}, 876--887 (2008)

\bibitem{Mijajlovic2018} Mijajlovi\'{c},  N., Ja\'{c}imovi\'{c}, M., Noor, M. A.: Gradient-type projection methods for quasi-variational
inequalities. Optim. Lett. \textbf{13}, 1885--1896 (2019)

\bibitem{Mosco76} Mosco, U.: Implicit variational problems and quasi variational inequalities. Lecture Notes in Mathathematics, Springer, Berlin \textbf{543} (1976)

\bibitem{Nesterov} Nesterov, Y., Scrimali, L.: Solving strongly monotone variational and quasi-variational inequalities. Discrete Contin.
Dyn. Syst. \textbf{31}, 1383--1396 (2011)


\bibitem{noor85} Noor, M. A.: An iterative scheme for a class of quasi variational inequalities. J.  Math. Anal.
Appl. \textbf{110}, 463--468 (1985)

\bibitem{noor88} Noor, M. A.: Quasi Variational Inequalities. Appl. Math. Lett. \textbf{1}, 367-370 (1988)

\bibitem{nnk13} Noor, M. A., Noor, K. I., Khan, A. G.: Some iterative schemes for solving extended general
quasi variational inequalities. Appl. Math. Inform. Sci. \textbf{7}, 917--925  (2013)

\bibitem{noor94} Noor, M. A., Oettli, W.: On general nonlinear complementarity problems and quasi equilibria. Matematiche (Catania) \textbf{49}, 313--331 (1994)

\bibitem{Ochs} Ochs, P., Brox, T., Pock, T.: iPiasco: Inertial Proximal Algorithm for strongly convex Optimization. J. Math. Imaging Vision. {\bf 53}, 171--181 (2015).

\bibitem{Polyak2} Polyak,  B. T.: Some methods of speeding up the convergence of iterative methods.  Zh. Vychisl. Mat. Mat. Fiz. {\bf 4}, 791--803  (1964)

\bibitem{Ryazantseva} Ryazantseva, I. P.: First-order methods for certain quasi-variational inequalities in a Hilbert space. Comput. Math.
Math. Phys. \textbf{47}, 183--190 (2007)


\bibitem{Shehu} Shehu, Y.: Convergence rate analysis of inertial Krasnoselskii-Mann-type iteration with applications. Numerical Funct. Anal. Optim. \textbf{39}, 1077--1091 (2018)


\bibitem{ShehuOptim} Shehu, Y., Gibali, A., Sagratella, S.: Inertial projection-type methods for solving quasi-variational inequalities in real Hilbert spaces. J. Optim. Theory Appl.  \textbf{184}, 877--894 (2020).

    \bibitem{Stampacchia64} Stampacchia, G.: Formes bilineaires coercitives sur les ensembles convexes. Acad\'{e}mie des Sciences de Paris \textbf{258}, 4413--4416 (1964)

\bibitem{ZhangZhangSunToh}
 Zhang, Y.; Zhang, N.; Sun, D.; Toh, K.C. An efficient hessian based algorithm for solving large-scale
sparse group lasso problems. Math. Program. \textbf{179}, 223--263 (2020).

  \end{thebibliography}
\end{document}